\newtheorem{theorem}{Theorem}[section]
\newtheorem{remark}[theorem]{Remark}
\newtheorem{assumption}[theorem]{Assumption}
\newtheorem{example}[theorem]{Example}
\newenvironment {proof} {{\it Proof.}}{\hspace*{\fill}$\Box$\par\vspace{4mm}}
\newcommand{\mc}{\mathcal}
\newcommand{\mb}{\mathbb}
\newcommand{\bb}{\textbf}
\newcommand{\cl}{\mbox{\rm cl}}
\newcommand{\co}{\mbox{\rm co}}
\begin{document}

\title{Management of a hydropower system via convex duality}

\author{Kristina Rognlien Dahl \footnote{Department of Mathematics, University of Oslo. kristd@math.uio.no.}}

\maketitle

\abstract{
We consider the problem of managing a hydroelectric power plant system. The system consists of $N$ hydropower dams, which all have some maximum production capacity. The inflow to the system is some stochastic process, representing the precipitation to each dam. The manager can control how much water to release from each dam at each time. She would like to choose this in a way which maximizes the total revenue from the initial time $0$ to some terminal time $T$. The total revenue of the hydropower dam system depends on the price of electricity, which is also a stochastic process. The manager must take this price process into account when controlling the draining process. However, we assume that the manager only has partial information of how the price process is formed. She can observe the price, but not the underlying processes determining it. By using the conjugate duality framework of Rockafellar~\cite{Rockafellar}, we derive a dual problem to the problem of the manager. This dual problem turns out to be simple to solve in the case where the price process is a martingale or submartingale with respect to the filtration modelling the information of the dam manager.  
}

\section{Introduction}
\label{sec: intro}

The framework of this paper is inspired by those presented in Huseby~\cite{Huseby} and Alais et al.~\cite{Alais}, as both consider management of hydropower dams. We consider the problem of managing a hydroelectric power plant system. The system consists of $N$ hydropower dams, which all have some maximum production capacity. The inflow to the system is some stochastic process, representing the precipitation or other natural source of inflow such as snow melting or streams to each dam. The manager of the facility can choose how much water to turbine from each dam at each time. She would like to choose this in a way which maximizes the total revenue from the initial time $0$ to some terminal time $T$. The total revenue of the hydropower dam system depends on the price of electricity, which is assumed to be a stochastic process. The manager must take this price process into account when controlling the draining process. Hence, the dilemma of our dam manager is how much water she should drain at each time, when she must respect the natural constraints of the facility (e.g. the maximum storage capacity and maximum production capacity) as well as take into consideration the uncertainty regarding inflow and electricity price. We apply the conjugate duality framework of Rockafellar~\cite{Rockafellar} to derive a dual problem to the initial problem of the dam manager. This dual problem turns out to be simple to solve in the case where the price process is a martingale or submartingale with respect to the filtration modelling the manager's information. For a brief introduction to conjugate duality theory, see the Appendix~\ref{sec: conjugate}.

In the paper by Huseby~\cite{Huseby}, the allocation of draining between the different dams is the focus. Hence, they do not maximize the total income, but instead they aim to satisfy the demand for electricity. In addition, the techniques they use to solve the problem is completely different from ours. 

In Alais et al.~\cite{Alais}, they consider a single multi-usage hydropower dam. The goal is to maximize the expected gain under a bound on the control, non-anticipativity of the draining strategy and a tourist constraint. This constraint requires that the water level of the dam is high enough during the tourist season with a certain probability. This is a different kind of constraint than what we have. 

Chen and Forsythe~\cite{Chen} consider problem somewhat similar to ours, but in continuous time. They derive a Hamilton Jacobi Bellman equation, which turns out to be a partial integrodifferential equation and study and use viscosity solutions to study its properties.

In contrast, we consider a discrete time, arbitrary scenario space setting and use conjugate duality techniques to derive a dual problem. To the best of our knowledge, such methods have not been applied to hydroelectric dam management problems before. However, in mathematical finance, the use of duality methods have been extensively studied by for Pennanen~\cite{Pennanen1}-\cite{Pennanen2}, Pennanen and Perkki{o}~\cite{Pennanen}, King~\cite{King}, King and Korf~\cite{KingKorf} and others over the past decade. Some advantages with duality methods are:

\begin{itemize}
\item{The optimal value of the dual problem gives a bound on the optimal value of the primal problem.}
\item{In some cases, so-called strong duality holds: The optimal primal value is actually equal to the optimal dual value.}
\item{The method is extremely suitable for handling various kinds of constraints without added complexity. This is not the case for classical stochastic control methods such as stochastic dynamic programming and the stochastic maximum principle, see Ji and Zhou~\cite{JiZhou} for more on this.}
\item{If the case where the problem and the constraints are linear, in the finite scenario space case, the method reduces to linear programming which can be solved efficiently using e.g. simplex or interior point methods. Using the simplex algorithm for solving the dual problem at the same time provides the primal optimal control variables as shadow prices of the dual constraints, see e.g. Vanderbei~\cite{Vanderbei}}
\end{itemize}

The structure of the paper is as follows: In Section~\ref{sec: model}, we present the model for the hydropower dam system and the stochastic processes involved. We also introduce the maximization problem of the dam manager, and rewrite this to a more tractable form. Then, in Section~\ref{sec: dual}, we choose a suitable perturbation space and derive its dual space. Based on this, we derive the dual problem. In a special case, this dual problem turns out to be simple to solve, so we find the dual solution in this case. This optimal dual value gives an upper bound on the optimal value of the primal problem. In Section~\ref{sec: strong_dual} we discuss whether strong duality holds for our problem. We use this to discuss some computational properties of the problem, in particular when it is simpler to solve the dual problem than the primal. In Section~\ref{sec: system}, we add structure to the dam system. However, despite the added complexity, the conjugate duality methods works in the same way as before. Finally, in Appendix~\ref{sec: conjugate}, we give an overview of conjugate duality theory for the convenience of the reader.

\section{Modelling a system of hydropower dams}
\label{sec: model}

We consider a system consisting of $N$ hydropower dams over a discrete time period $t=0, 1, 2, \ldots, T < \infty$. This framework is equipped with an (arbitrary) probability space $(\Omega, P, \mc{F})$. Related to this probability space, we have several different stochastic vector processes. For each of the following processes, vector component $i \in \{1, 2,\ldots, N\}$ corresponds to dam $i$ of the facility:

The electricity price process denoted by 
\[
\bb{S}(t,\omega) = (S_1(t,\omega), S_2(t,\omega), \ldots, S_N(t,\omega)) \in \mb{R}^N.
\] 

We interpret $S_i(t,\omega)$ as the price of electricity for dam $i$ at time $t$ if scenario $\omega \in \Omega$ is realized. See Remark~\ref{remark: price} for an explaination of why we consider (potentially) different prices for different dams. Note that we do not make any assumptions on the structure of this process, so it can be any discrete time stochastic process. In the following, we will usually (for ease of notation) omit writing out the $\omega \in \Omega$ in the notation of the various processes. In the following, we will usually (for ease of notation) omit writing out the $\omega \in \Omega$ in the notation of the various processes.
  
The inflow process is $\textbf{R}(t,\omega) \in \mb{R}^N$, and for any time $t$, the random variable $\textbf{R}(t)$ is interpreted as the amount of precipitation and other natural inflow to the dams between times $t$ and $t+1$. The inflow is measured in terms of units of electricity which the water corresponds to. Note that this can take both positive and negative values. In practice, this means that we allow for both positive inflow such as rain and snow melting as well as negative ``inflow'' such as evaporation or natural draining of water.

The amount of water in the dams is denoted by $\textbf{V}(t,\omega) \in \mb{R}^N$. For any time $t$, the random variable $\textbf{V}(t)$ is the amount of water in the dam (measured in terms of units of electricity which the water corresponds to) at time $t$. Note that we must have $\bb{V}(t, \omega) \geq 0$ (since the dams cannot hold a negative amount of water). In particular, the initial water level $\bb{V}(1) \geq 0$.

The draining process is denoted by $\textbf{D}(t,\omega) \in \mb{R}^N$, and for any time $t$, the random variable $\textbf{D}(t)$ is interpreted as the amount of water which is drained from the dams between times $t$ and $t+1$. The draining process is also measured in terms of units of electricity which the water corresponds to. This process can be controlled by the dam manager. When the dam manager chooses $\textbf{D}(t)$, she does so based on her current information, which may only be partial. In particular, in the case where the manager has full information, she observes $\textbf{S}(t), \textbf{V}(t)$ and $\textbf{R}(t-1)$ as well as all previous values of these processes. When selling the electricity from the drained water, the manager will get the unknown price $\textbf{S}(t+1)$. Note also that the manager must choose how much to drain in a period before knowing how large the inflow will be over the same period of time.

An illustration of the order which information is revealed and choices are made is shown in Figure~\ref{fig: info}.

 \begin{figure}[ht]
 \setlength{\unitlength}{0.7mm}
 \begin{picture}(60,40)(-30,0)
  \put(1,4){\line(35,0){120}}

  \put(20,4){\circle*{1}}
  \put(60,4){\circle*{1}}
  \put(100,4){\circle*{1}}
	
  \put(20,8){\makebox(0,0){$\textbf{V}(1)$}}
  \put(20,15){\makebox(0,0){$\textbf{S}(1)$}}
  
  \put(60,8){\makebox(0,0){$\textbf{V}(2)$}}
  \put(60,15){\makebox(0,0){$\textbf{S}(2)$}}
  \put(60,22){\makebox(0,0){$\textbf{R}(1)$}}

	\put(100,8){\makebox(0,0){$\textbf{V}(3)$}}
  \put(100,15){\makebox(0,0){$\textbf{S}(3)$}}
  \put(100,22){\makebox(0,0){$\textbf{R}(2)$}}
  

	\put(40,0){\makebox(0,0){$\textbf{D}(1)$}}
	\put(80,0){\makebox(0,0){$\textbf{D}(2)$}}

  \put(20,1){\makebox(0,0){$t = 1$}}
  \put(60,1){\makebox(0,0){$t = 2$}}
  \put(100,1){\makebox(0,0){$t=3$}}
  \put(115,1){\makebox(0,0){$\ldots$}}
 \end{picture}
\caption{The order of information.}
\label{fig: info}
\end{figure}
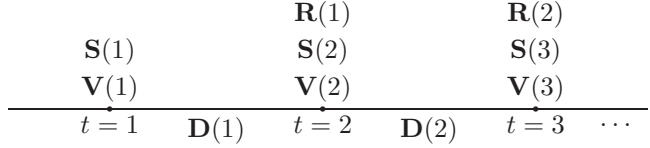

\begin{remark}
{\rm There is an ambiguity in the way we have chosen to interpret the draining process. As an alternative, one could say that the draining happens instantaneously, and hence $\textbf{D}(t)$, the water drained at time $t$ could be sold at the price $\textbf{S}(t)$. This kind of interpretation would eliminate some of the uncertainty of the dam manager. However, not that the manager still has to take into consideration that the water she drains now may have been better off being saved for a later time when the electricity price is potentially higher. Hence, the electricity price is an important source of uncertainty in this case as well.}
\end{remark}

\begin{remark}
\label{remark: price}
{\rm Note that we seemingly consider $N$ different prices of electricity, since $\bb{S}(t) \in \mb{R}^N$. This essentially means that we consider different prices for the different dams. The reason for doing this is that the different prices can be used to reflect different technologies in the dams, different locations of water relative to the turbines etc. Hence, this can be useful even though the actual market price of electricity is just one price. If the dams are equal, one can just let the price vector $\bb{S}(t) = (S(t), S(t), \ldots, S(t))$, where $S(t)$ is the market price of electricity.}
\end{remark}

An overview of the notation is shown in Table~\ref{table: notasjon}.

\begin{figure}
\begin{center}
\begin{tabular}{ l l }
 $\bb{S}(t)$ & Electricity price  \\ 
 $\bb{R}(t)$ & Inflow process \\
 $\bb{V}(t)$ & Amount of water in dams \\
 $\bb{D}(t)$ & Draining process (control variable) \\
 $\bb{m}$ & Maximum amount of water in dams \\
 $\bb{b}$ & Maximal production capacities
 
\end{tabular}
\end{center}
\caption{An overview of the notation.}
\label{table: notasjon}
\end{figure}

Since water cannot be turbined infinitely fast, we also have a maximal production capacity for the different dams. These maximal production capacities are the components of the vector $\bb{b}=(b_1, b_2, \ldots, b_N)$. In addition, the dams have a finite capacity to hold water without flooding, so we let the vector $\bb{m}= (m_1, m_2, \ldots, m_N)$ be the maximal amount of water in the dams. See Figure~\ref{fig: Ndams} for an illustration of the dam system.

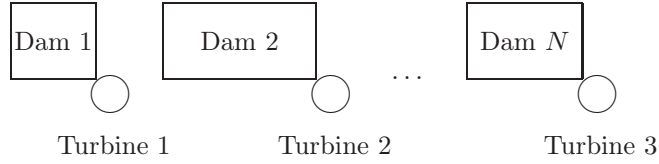
\begin{figure}
\center
\setlength{\unitlength}{1cm}
\begin{picture}(8,4)
\put(0,2){\framebox(1.1,1){Dam $1$}}
\put(2,2){\framebox(2,1){Dam $2$}}
\put(5,2){\mbox{\ldots}}
\put(6,2){\framebox(1.5,1){Dam $N$}}
\put(1.3,1.8){\circle{0.5}}
\put(4.2,1.8){\circle{0.5}}
\put(7.7,1.8){\circle{0.5}}
\put(0.6,1){\mbox{Turbine $1$}}
\put(3.5,1){\mbox{Turbine $2$}}
\put(7,1){\mbox{Turbine $3$}}
\end{picture}
\caption{The hydropower system: $N$ separate dams.}
\label{fig: Ndams}
\end{figure}

The information of the dam manager is given by a filtration $(\mc{G}_t)_{t=1}^T$ which can be a subfiltration of the full information filtration, i.e., the manager may only have partial information. Recall that the full information filtration, denoted $(\mc{F}_t)_{t=1}^T$ is the one generated by $\textbf{S}(t), \textbf{V}(t)$ and $\textbf{R}(t-1)$. Hence, $\mc{G}_t \subseteq \mc{F}_t$ for all times $t=1, 2, \ldots, T$. We assume that $\mc{F}_T = \mc{G}_T = \Omega$, i.e., that at the terminal time the true scenario is revealed to the manager. Note that this concept of partial information is quite general. For instance, the dam manager may have delayed price information, incomplete information about the inflow to the dams or the price formation. In Example~\ref{ex: partial} we consider such a situation.

Let 

\[
\mc{D}_{\mc{G}} := \{\mbox{all } \{\mc{G}_t\}\mbox{-adapted processes}  \}.
\]

The problem of the dam manager is as follows:

\begin{equation}
\label{eq: problem}
\begin{array}{rlll}
\max_{\{\bb{D(t)}\}} &&E[\sum_{t=1}^{T-1} \bb{D}(t) \cdot \bb{S}(t+1) + \alpha \bb{V}(T) \cdot \bb{S}(T)] \\[\smallskipamount]
\mbox{such that}\\[\smallskipamount]
\bb{V}(t+1) &=& \bb{V}(t) + \bb{R}(t) - \bb{D}(t), \quad t=1, \ldots, T-1 \mbox{ a.s.}, \\[\smallskipamount]
\bb{D}(t)  &\leq& \bb{V}(t) \leq \bb{m}, \quad t=1, \ldots,T-1 \mbox{ a.s.}, \\[\smallskipamount]
0 &\leq& D_i(t) \leq b_i, \quad i=1, \ldots, N \mbox{ and } t=1, \ldots, T-1 \mbox{ a.s.}
\end{array}
\end{equation}
\noindent where we maximize over all drain processes $\bb{D} \in \mc{D}_{\mc{G}}$, the initial water level $\bb{V}(1)$ is given and $\alpha$ is a constant. Note that the constant $\alpha$ gives weight to the terminal value of the water in the reservoir. Note that it can be any value, but it is natural to have $\alpha \in (0,1]$. This is natural because the remaining water should have some value, but perhaps not $\bb{S}(T) \cdot \bb{V}(T)$, since it cannot be turbined immediately.

That is, she wants to maximize the total revenue from the hydropower dams while not draining more water than what is available at any time (given the development of the water level) while also respecting the maximum production capacities and maximum water levels of the different dams and not draining more water than what's available at each time.

Note that problem~\eqref{eq: problem} is an infinite linear programming problem, i.e., the problem is linear with infinitely many constraints and variables. For more on infinite programming, see for instance Anderson and Nash~\cite{AndersonNash} and for a numerical method, see e.g. Devolder et al.~\cite{DevolderEtAl}.  However, if $\Omega$ is finite, \eqref{eq: problem} is a linear programming problem. In this case, the problem can be solved numerically using the simplex algorithm or an interior point method, see for example Vanderbei~\cite{Vanderbei}

\bigskip

\begin{example}
\label{ex: partial}
This example is a twist on the one in Dahl~\cite{Dahl}.

In this example, we illustrate a kind of partial information which is not delayed information. Although the results of this paper hold when $\Omega$ is an arbitrary set, we consider a situation where $\Omega$ is finite. This simplifies the intuition and allows for illustration via scenario trees. For computational purposes and practical applications, this is also the most relevant.

Consider times $t=1,2,3$, $\Omega := \{\omega_1, \omega_2, \ldots, \omega_5\}$ and a hydropower facility with only one dam. The inflow process to the dam is $R(t,\omega)$, and the electricity price process is $S(t,\omega)$. Let
\[
R(t,\omega) := X(t,\omega) + \xi(t,\omega),
\]
\noindent i.e.,  the inflow process is composed of two other processes, $X$ and $\xi$. For instance, $X(t)$ may be the precipitation, while $\xi(t)$ is the inflow due to snow-melting.

The seller does not observe these two processes, only the current inflow. The following scenario trees show the development of the processes $X$ and $\xi$, as well as the inflow process observed by the seller.

%
 \begin{figure}[H]
 \setlength{\unitlength}{0.7mm}
 \begin{picture}(50,80)(-40,0)
  \put(20,45){\circle*{3}} 
  \put(40,65){\circle*{3}} 
  \put(40,25){\circle*{3}} 
  \put(60,13){\circle*{3}} 
  \put(60,37){\circle*{3}} 
  \put(60,57){\circle*{3}} 
  \put(60,65){\circle*{3}} 
  \put(60,73){\circle*{3}} 
  \put(20,45){\line(1,1){20}}
  \put(20,45){\line(1,-1){20}}
  \put(40,65){\line(5,0){20}}
  \put(40,65){\line(5,2){20}}
  \put(40,65){\line(5,-2){20}}
  \put(40,25){\line(5,3){20}}
  \put(40,25){\line(5,-3){20}}

  \put(2,53){\makebox(0,0){$\Omega = \{\omega_1, \omega_2, \ldots, \omega_5\}$}}
  \put(37,75){\makebox(0,0){$\xi=3, \{\omega_1, \omega_3, \omega_5\}$}}
  \put(37,12){\makebox(0,0){$\xi=5, \{\omega_2, \omega_4\}$}}
  \put(68,73){\makebox(0,0){$\omega_1$}}
  \put(68,57){\makebox(0,0){$\omega_5$}}
    \put(68,65){\makebox(0,0){$\omega_3$}}
  \put(68,37){\makebox(0,0){$\omega_2$}}
  \put(68,13){\makebox(0,0){$\omega_4$}}

  \put(1,4){\line(35,0){80}}
  \put(20,4){\circle*{1}}
  \put(40,4){\circle*{1}}
  \put(60,4){\circle*{1}}

  \put(20,1){\makebox(0,0){$t = 0$}}
  \put(40,1){\makebox(0,0){$t = 1$}}
  \put(60,1){\makebox(0,0){$t = T = 2$}}
 \end{picture}
\caption{The process $\xi$}
\end{figure}
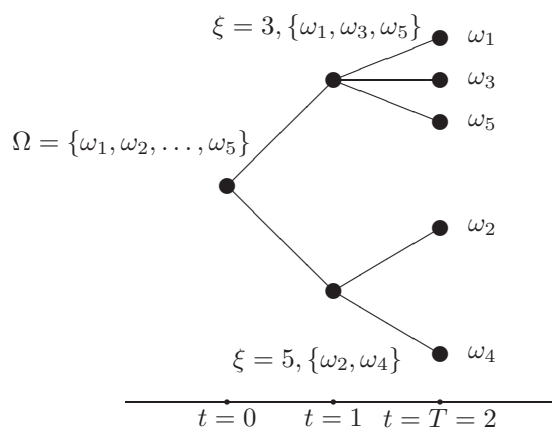

\bigskip

%
 \begin{figure}[H]
 \setlength{\unitlength}{0.7mm}
 \begin{picture}(50,80)(-40,0)
  \put(20,45){\circle*{3}} 
  \put(40,65){\circle*{3}} 
  \put(40,25){\circle*{3}} 
  \put(60,13){\circle*{3}} 
  \put(60,25){\circle*{3}} 
  \put(60,37){\circle*{3}} 
  \put(60,57){\circle*{3}} 
  \put(60,73){\circle*{3}} 
  \put(20,45){\line(1,1){20}}
  \put(20,45){\line(1,-1){20}}
  \put(40,65){\line(5,2){20}}
  \put(40,65){\line(5,-2){20}}
  \put(40,25){\line(5,3){20}}
  \put(40,25){\line(5,0){20}}
  \put(40,25){\line(5,-3){20}}

  \put(2,53){\makebox(0,0){$\Omega = \{\omega_1, \omega_2, \ldots, \omega_5\}$}}
  \put(37,75){\makebox(0,0){$X=4, \{\omega_1, \omega_2\}$}}
  \put(37,12){\makebox(0,0){$X=2, \{\omega_3, \omega_4, \omega_5\}$}}
  \put(68,73){\makebox(0,0){$\omega_1$}}
  \put(68,57){\makebox(0,0){$\omega_2$}}
  \put(68,37){\makebox(0,0){$\omega_3$}}
  \put(68,25){\makebox(0,0){$\omega_4$}}
  \put(68,13){\makebox(0,0){$\omega_5$}}

  \put(1,4){\line(35,0){80}}
  \put(20,4){\circle*{1}}
  \put(40,4){\circle*{1}}
  \put(60,4){\circle*{1}}

  \put(20,1){\makebox(0,0){$t = 0$}}
  \put(40,1){\makebox(0,0){$t = 1$}}
  \put(60,1){\makebox(0,0){$t = T = 2$}}
 \end{picture}
\caption{The process $X$}
\end{figure}
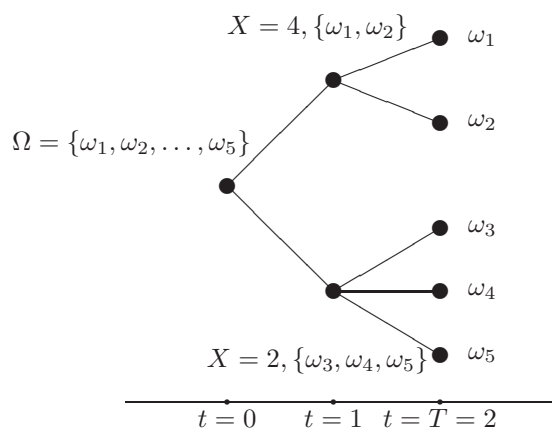

\bigskip

Full information in this market corresponds to observing both processes $X$ and $\xi$ (as well as full information corresponding to the price process $S(t)$), i.e., the full information filtration (w.r.t. the inflow) $(\mc{F}_t)_t$ is the sigma algebra generated by $X$ and $\xi$, $\sigma(X,\xi)$. However, the filtration observed by the seller $(\mc{G}_t)_t$, generated by the inflow process $R(t)$, is (strictly) smaller than the full information filtration. For instance, if you observe that $\xi(1)=3$ and $X(1)=4$, you know that the realized scenario is $\omega_1$. However, this is not possible to determine only through observation of the inflow process $R(t)$. Hence, this is an example of a model with hidden processes, which is a kind of partial information that is not delayed information.

%
 \begin{figure}[H]
 \setlength{\unitlength}{0.7mm}
 \begin{picture}(50,80)(-40,0)
  \put(20,45){\circle*{3}} 
  \put(40,65){\circle*{3}} 
  \put(40,25){\circle*{3}} 
   \put(40,45){\circle*{3}} 
 \put(60,13){\circle*{3}} 
  \put(60,32){\circle*{3}} 
  \put(60,57){\circle*{3}} 
  \put(60,73){\circle*{3}} 
 \put(60,45){\circle*{3}} 
  \put(20,45){\line(1,1){20}}
  \put(20,45){\line(1,-1){20}}
   \put(20,45){\line(1,0){20}}
 \put(40,65){\line(5,2){20}}
  \put(40,65){\line(5,-2){20}}
  \put(40,25){\line(3,1){20}}
  \put(40,45){\line(1,0){20}}
  \put(40,25){\line(5,-3){20}}

  \put(-5,53){\makebox(0,0){$R=6, \Omega = \{\omega_1, \omega_2, \ldots, \omega_5\}$}}
  \put(40,75){\makebox(0,0){$R=7, \{\omega_1, \omega_4\}$}}
  \put(40,12){\makebox(0,0){$R=9, \{\omega_3, \omega_5\}$}}
   \put(40,40){\makebox(0,0){$R=5, \{\omega_2\}$}}
  \put(73,73){\makebox(0,0){$R= 3, \omega_1$}}
  \put(73,56){\makebox(0,0){$R=8, \omega_4$}}
    \put(73,44){\makebox(0,0){$R=9, \omega_2$}}
  \put(73,32){\makebox(0,0){$R=7, \omega_3$}}
  \put(73,13){\makebox(0,0){$R=4, \omega_5$}}

  \put(1,4){\line(35,0){80}}
  \put(20,4){\circle*{1}}
  \put(40,4){\circle*{1}}
  \put(60,4){\circle*{1}}

  \put(20,1){\makebox(0,0){$t = 0$}}
  \put(40,1){\makebox(0,0){$t = 1$}}
  \put(60,1){\makebox(0,0){$t = T = 2$}}
 \end{picture}
\caption{The inflow process $R(t)$}
\end{figure}
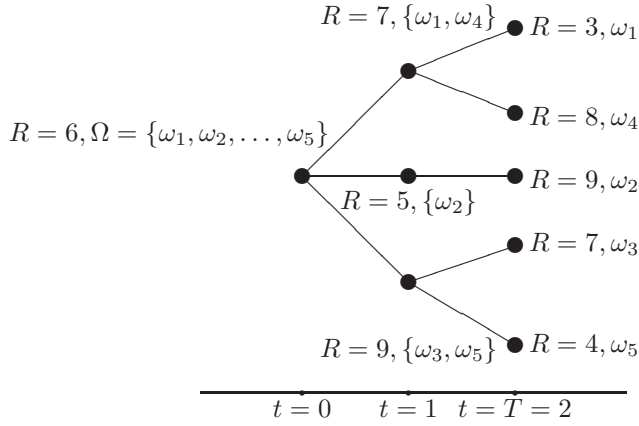
\end{example}

It turns out that we can rewrite the problem in such a way that we remove the $\bb{V}(t)$ process. Since $\bb{V}(t+1) = \bb{V}(t) + \bb{R}(t) - \bb{D}(t), t=1, \ldots, T-1$, we have:

\[
\begin{array}{lll}
 \Delta \bb{V}(t) &:=& \bb{V}(t+1) - \bb{V}(t) \\
&=& \bb{R}(t)-\bb{D}(t). 
\end{array}
\]

Hence,

\begin{equation}
\label{eq: mellomregning}
 \begin{array}{lll}
  \bb{V}(t) &=& \bb{V}(t) - \bb{V}(t-1) + \bb{V}(t-1) - \bb{V}(t-2) + \bb{V}(t-2) - \ldots \\[\smallskipamount]
  &&- \bb{V}(2) + \bb{V}(2) - \bb{V}(1) + \bb{V}(1) \\[\smallskipamount]
  &=& \sum_{s=1}^{t-1} \Delta \bb{V}(s) + \bb{V}(1) \\[\smallskipamount]
  &=& \sum_{s=1}^{t-1} (\bb{R}(s)- \bb{D}(s)) + \bb{V}(1).
 \end{array}
\end{equation}

Therefore, problem~\eqref{eq: problem} can be rewritten:

\[
\begin{array}{lll}
\label{eq: problem_omskrevet}
\max_{\bb{D}} \hspace{0.05cm} E[\sum_{t=1}^{T-1} \bb{D}(t) \cdot \bb{S}(t+1) + \alpha \bb{S}(T) \cdot \Big(\sum_{s=1}^{T-1} (\bb{R}(s)- \bb{D}(s)) + \bb{V}(1)\Big) ] \\[\smallskipamount]
\mbox{such that}
\end{array}
\]
\begin{equation}
\begin{array}{rlll}
\bb{D}(t)  &\leq& \sum_{s=1}^{t-1} (\bb{R}(s)- \bb{D}(s)) + \bb{V}(1) \leq \bb{m}, t=1, \ldots,T-1 \mbox{ a.s.}, \\[\smallskipamount]
0 &\leq& D_i(t) \leq b_i, i=1, \ldots, N \mbox{ and } t=1, \ldots, T-1 \mbox{ a.s.}
\end{array}
\end{equation}
\noindent where $\sum_{s=1}^0 \ldots = 0$, so $\bb{D}(1) \leq \bb{V}(1) \leq \bb{m}$. Also, as previously, the maximization is over all $\bb{D} \in \mc{D}_{\mc{G}}$.

\begin{remark}
{\rm In the current framework, we consider $N$ different hydro-dams, but since they are not connected to one another and there is a separate maximal production capacity for each dam, we would not lose anything by considering just one dam instead. However, in Section~\ref{sec: system}, we will add network structure connecting the dams. To have a consistent notation, we choose to formulate the problem in vector form from the beginning. 

Note also that if the hydropower facility has a maximal total production capacity $C < N(m_1 + m_2 + \ldots + m_N)$, we could not just consider one dam instead of $N$ dams.} 
\end{remark}

\section{The dual problem}
\label{sec: dual}

In this section, we will use the conjugate duality framework of Rockafellar~\cite{Rockafellar} to derive a dual problem to the rewritten version of the manager's problem~\eqref{eq: problem_omskrevet}. See the Appendix~\ref{sec: conjugate} for an overview of this theory.

\begin{remark}
 \label{remark: conjugate_duality}
{\rm The main idea of conjugate duality is to represent the original, or \emph{primal}, problem as one half of a minimax problem where a saddle point exists. The other half of this minimax problem is called the \emph{dual problem}. In order to do this, we introduce a function $K(\bb{D},\bb{y})$, called \emph{the Lagrange function}, depending on some perturbation variables $\bb{y}$ such that there exists a saddle point for this function. The function $K(\bb{D},\bb{y})$ is chosen such that our primal problem is $\sup_{\bb{D} \in \mc{D}_{\mc{G}}} \inf_{\bb{y} \in Y} K(\bb{D},\bb{y})$. Then, the dual problem is $\inf_{\bb{y} \in Y} \sup_{\bb{D} \in \mc{D}_{\mc{G}}} K(\bb{D},\bb{y})$, and this optimal value bounds our primal problem (from above). Under some conditions, these optimal primal and dual values coincide (and are attained in the saddle point of $K(\bb{D},\bb{y})$). In this case we say that \emph{strong duality} holds. For a more detailed presentation of conjugate duality theory, see the Appendix~\ref{sec: conjugate} and Rockafellar~\cite{Rockafellar}.}
\end{remark}

Let $p \in [1, \infty)$, and define the perturbation space 

\[
U = \{u \in L^p(\Omega, \mc{F}, P: \mb{R}^{4(T-1)N} | \quad \bb{u}= (\bb{u}_{\gamma}, \bb{u}_v, \bb{u}_{\lambda}, \bb{u}_w) \}
\]
\noindent where writing $\bb{u}= (\bb{u}_{\gamma}, \bb{u}_v, \bb{u}_{\lambda}, \bb{u}_w)$, $\bb{u}_i(\omega) \in \mb{R}^{(T-1)N}$ for $i=\gamma, v, \lambda, w$, is to highlight the different parts of the permutation vector corresponding to the constraints of the manager's problem~\eqref{eq: problem_omskrevet}.

Corresponding to the perturbation space $U$, we define the dual space 

\[
Y = U^* = \{\bb{y} \in L^q(\Omega, \mc{F}, P: \mb{R}^{4(T-1)N} | \quad \bb{y}= (\gamma, \bb{v}, \lambda, \bb{w}) \}
\]
\noindent where the vector of dual variables is $\bb{y} = (\gamma, \bb{v}, \lambda, \bb{w})$. Here, $\gamma$ is the vector of $\gamma_{i,t}, i=1, \ldots, N, t=1, \ldots, T-1$ and similarly for $\bb{v}$,$\lambda$ and $\bb{w}$. Note that the components of $\bb{y}$ and $\bb{u}$ correspond to one another.

We define a bilinear pairing between the dual spaces $U$ and $Y$ by $\langle \bb{u}, \bb{y} \rangle = E[\bb{u} \cdot \bb{y}]$, where $\langle \cdot, \cdot \rangle$ denotes the Euclidean inner product. The perturbation function $F : \mc{D}_{\mc{G}} \times U \rightarrow \mb{R}$ is defined in the following way:

\medskip

\noindent $F(\bb{D},\bb{u}) = E[\sum_{t=1}^{T-1} \bb{D}(t) \cdot \bb{S}(t+1) + \alpha \bb{S}(T) \cdot \Big(\sum_{s=1}^{T-1} (\bb{R}(s)- \bb{D}(s)) + \bb{V}(1)\Big) ]$ 

if 
\begin{equation}
\label{eq: perturb}
\begin{array}{lll}
\bb{D}(t) - \bb{b} &\leq& \bb{u}_v, \\[\smallskipamount]
-\bb{D}(t) &\leq& \bb{u}_{\gamma}, \\[\smallskipamount]
\bb{D}(t) - \sum_{s=1}^{t-1} (\bb{R}(s)- \bb{D}(s)) - \bb{V}(1) &\leq& \bb{u}_{\lambda, t} \quad \mbox{ for } t=1, \ldots, T-1, \\[\smallskipamount]
\sum_{s=1}^{t-1} (\bb{R}(s)- \bb{D}(s)) + \bb{V}(1) -\bb{m} &\leq& \bb{u}_{w, t} \quad \mbox{ for } t=1, \ldots, T-1
\end{array}
\end{equation}
and $F(\bb{D},\bb{u}) =-\infty$ otherwise.

Then, the Lagrange function is

\[
\begin{array}{lll}
 K(\bb{D},\bb{y}) &=  E[\sum_{t=1}^{T-1} \bb{D}(t) \cdot \bb{S}(t+1) + \alpha \bb{S}(T) \cdot \Big( \sum_{t=1}^{T-1}(\bb{R}(t)- \bb{D}(t)) + \bb{V}(1)\Big)] \\[\smallskipamount]
 & + E[\sum_{t=1}^{T-1} \gamma \cdot \bb{D}(t)] + E[\sum_{t=1}^{T-1} \bb{v}_{t} \cdot (\bb{b} - \bb{D}(t))] \\[\smallskipamount]
 &+ E[\sum_{t=1}^{T-1} \lambda_{t} \cdot (\bb{V}(1) + \sum_{s=1}^{t-1} (\bb{R}(s) - \bb{D}(s)) - \bb{D}(t))] \\[\smallskipamount]
 & + E[\sum_{t=1}^{T-1} \bb{w}_{t} \cdot \big(\bb{m} - \bb{V}(1)- \sum_{s=1}^{t-1} (\bb{R}(s) - \bb{D}(s))\big)]
 \end{array}
\]

The dual objective function is

\[
 \begin{array}{lll}
  g(\bb{y}) &= \sup_{\bb{D}} K(\bb{D},\bb{y}) \\[\smallskipamount]
  &=E[ \alpha \bb{V}(1) \cdot \bb{S}(T) + \sum_{t=1}^{T-1} \bb{V}(1) \cdot \lambda_t + \alpha \sum_{t=1}^{T-1}  \bb{S}(T) \cdot \bb{R}(t) + \sum_{t=1}^{T-1} \bb{v}_t \cdot \bb{b} \\[\smallskipamount]
  & + \sum_{t=1}^{T-1} \bb{w}_t \cdot \big(\bb{m} - \bb{V}(1) -\sum_{s=1}^{t-1} \bb{R}(s) \big) + \sum_{t=1}^{T-1}\sum_{s=1}^{t-1} \lambda_t \cdot \bb{R}(s)] \\[\smallskipamount]
  &+ \sum_{t=1}^{T-1}\sum_{i=1}^{N} \sup_{D_i(t)} \tilde{F}(D_i(t))
 \end{array}
\]
\noindent where $\lambda, \bb{v}, \gamma, \bb{w} \geq 0$ and 

\[
 \tilde{F}(D_i(t))= E[D_i(t) \{S_i(t+1) - \alpha S_i(T) + \gamma_{i,t} - v_{i,t} - \lambda_{i,t}  + \sum_{s=t+1}^{T-1} \big( w_{i,s} -  \lambda_{i,s} \big) \}]
\]

From Lemma 2.1 and 2.2 in Dahl~\cite{Dahl}, it follows that the dual problem is equivalent to
\begin{equation}
\label{eq: dual}
 \begin{array}{lll}
 C + \inf_{\textbf{y} \geq 0} \sum_{t=1}^{T-1}E[\lambda_t \cdot \{\bb{V}(1) + \sum_{s=1}^{t-1} \bb{R}(s)\} \\[\smallskipamount]
  \hspace{3.5cm}+ \bb{v}_t \cdot \bb{b} + \bb{w}_t \cdot \{ \bb{m} - \bb{V}(1) -\sum_{s=1}^{t-1} \bb{R}(s) \}] \\[\smallskipamount]
  \mbox{such that} \\[\smallskipamount]
  \int_A \{ \bb{S}(t+1) - \alpha \bb{S}(T) + \gamma_{t} - \bb{v}_{t} - \sum_{s=t}^{T-1} \lambda_{s} + \sum_{s=t+1}^{T-1} \bb{w}_{s}\}dP =\bb{0} \mbox{ } \forall \mbox{ } A \in \mc{G}_t
 \end{array}
\end{equation}
\noindent where $C:= \alpha E[ \bb{S}(T) \cdot \bb{V}(1) + \sum_{t=1}^{T-1} \bb{S}(T) \cdot \bb{R}(t)]$ and the constraint holds for all $t=1, \ldots, T-1$. The constraint can be rewritten:

\[
 E[S_i(t+1)| \mc{G}_t] - \alpha E[S_i(T)|\mc{G}_t] =  y_{i,t} - \gamma_{i,t} + \lambda_{i,t} +E[\sum_{s=t+1}^{T-1} \big( \lambda_{i,s} -  w_{i,s} \big) | \mc{G}_t], 
\]

\noindent where the constraint holds for $t=1, \ldots, T-1$, $i=1, \ldots, N$, and $y_{i,t}$ and $\gamma_{i,t}$ are $\mc{G}_t$ measurable. In words: The difference between the observed price of electricity and the expected value of the terminal price given the present information is equal to $y_{i,t} - \gamma_{i,t} +\lambda_{i,s} +E[\sum_{s=t+1}^{T-1} \big( \lambda_{i,s} -  w_{i,s} \big)| \mc{G}_t]$, where $\bb{v}, \gamma, \lambda, \bb{w} \geq \bb{0}$.

Note that from the conjugate duality theory, see Rockafellar~\cite{Rockafellar}, the optimal value of the dual problem is an upper bound to the primal maximization problem. So, the optimal value of the hydroelectric dam system is bounded from above by the optimal value of the minimization problem~\eqref{eq: dual}.

In some special cases, the dual problem is simple to solve. Now, assume that $\bb{R}(t) \geq 0$ a.s. for all times $t$, i.e., that there is no natural draining or evaporation from the dams and assume that $\alpha =1$. Also, assume that the electricity price process $\bb{S}(t)$ is a martingale w.r.t. the partial information filtration $(\mc{G}_t)_{t=1}^{T-1}$ and that 

\begin{equation}
\label{eq: assume}
\bb{m} - \bb{V}(1) -\sum_{s=1}^{t-1} \bb{R}(s) \geq \bb{0} \mbox{ a.s.}
\end{equation}

\noindent for all times $t=1, \ldots, T-1$. Note that this final assumption says that almost surely, none of the dams will flood even without draining any water. Since the price process is a martingale, we know that

\[
 E[\bb{S}(t+1) | \mc{G}_t] = E[\bb{S}(T)|\mc{G}_t].
\]

Hence, because of assumption~\eqref{eq: assume}, we see that the optimal solution of the dual problem is to choose $\lambda = \bb{v}=\gamma=\bb{w}=\bb{0}$. Due to the martingale property, this choice implies that the constraints are satisfied and the dual optimal value, $d^*$, is the lowest it can possibly be:

\[
d^*= E[ \bb{S}(T) \cdot \bb{V}(1) + \sum_{t=1}^{T-1} \bb{S}(T) \cdot \bb{R}(t)]. 
\]

If assumption~\eqref{eq: assume} still holds, but the electricity price is a submartingale with respect to the manager's information, it is also easy to see how to find an optimal dual solution. In this case, we know that $ E[\bb{S}(t+1) | \mc{G}_t] \leq E[\bb{S}(T)|\mc{G}_t]$. Therefore, we can let $\lambda = \bb{v} = \bb{w} =0$ and $\gamma_{t} = E[\bb{S}(T)-\bb{S}(t)|\mc{G}_t]$ for all $t=1, \ldots, T-1$. For this choice of $\bb{v} = (\lambda, \bb{v}, \gamma, \bb{w})$, we get the same optimal dual value as in the martingale case.

In the case where the price process is a supermartingale w.r.t. the filtration $(\mc{G}_t)_{t=1}^{T-1}$, it is not obvious what the optimal dual solution will be. In this case, we cannot define $\gamma_{i,t}$ as in the submartingale case, since we must have $\gamma_{i,t} \geq 0$. The same is true if assumption~\eqref{eq: assume} does not hold.       

Note that in the case where assumption~\eqref{eq: assume} holds and the price process is either a martingale or a submartingale, the constraint vectors $\bb{b}$ and $\bb{m}$ (on the maximal production capacities and maximal levels of water in the dams respectively) do not affect the optimal value of the dual problem. The reason for this is as follows: Consider the case where the price process is a martingale. In this case, the manager always expects the current price to be the same as the terminal time price, given her current information. Because of assumption~\eqref{eq: assume}, the manager does not expect to worry about the dams flooding, so she can just wait until the terminal time, and then be left with the remaining water value. This value is $d^*$. Due to the martingale assumption, she does not expect to lose money with this strategy. The argument in the submartingale case is completely parallel. However, since the price process is a submartingale, the manager actually expects to gain money based on this.

\begin{remark}
{\rm As mentioned, we could also consider the situation where the draining of water $\bb{D}(t)$ happens instantaneously, so the electricity can be sold at the known price $\bb{S}(t)$ instead of having to be sold at the next time step for $\bb{S}(t+1)$. However, the calculations in this case become completely identical to those above, except that $\bb{S}(t+1)$ is replaced by $\bb{S}(t)$ throughout. If the price process $(\bb{S}(t))_{t=1}^T$ is $(\mc{G}_t)_{t=1}^T$-adapted, this implies that $E[\bb{S}(t)|\mc{G}_t] = \bb{S}(t)$. Hence, we get a slight simplification of the previous expressions. However, the case where the dual problem is simple to solve is the same as before.}
\end{remark}

\section{Strong duality}
\label{sec: strong_dual}

Another natural question is when strong duality holds, i.e., when is the optimal value of problem~\eqref{eq: problem} equal to the optimal value of the dual problem~\eqref{eq: dual}? In the case where $\Omega$ is finite, the conjugate duality technique reduces to linear programming (LP) duality. Hence, we know from the LP strong duality theorem (see e.g. Vanderbei~\cite{Vanderbei}) that there is no duality gap in this case (since the primal problem clearly has a feasible solution: Just drain whatever flows into the dam). This means that the optimal values of problems~\eqref{eq: problem} and \eqref{eq: dual} are equal in the finite $\Omega$ case. 

We now turn to the case of arbitrary (infinite) $\Omega$, which is more complicated. 

\begin{remark}
 \label{remark: infinite_omega}
 {\rm The case of general $\Omega$ is clearly interesting from a theoretical point of view. It is also relevant for applications. For example, it may be difficult to choose just a few possible future scenarios to study. In this case, considering e.g. a set of scenarios which is normally distributed can be interesting. For instance, such an assumption could reflect that most of the scenarios are somewhere in the middle, but in some cases the realized scenario is very good or very bad.}
\end{remark}

In the remaining part of this section, we make some weak additional assumptions, all of which are very natural:

\begin{assumption}
\label{assumption: bounded}
Assume that:
\begin{itemize}
\item{The price process $\bb{S}(t)$ is bounded on $[0,T]$.}
\item{The inflow process $\bb{R}(t)$ is bounded on $[0,T]$ and $R_i(t) > 0$ for all times $t$ $($so there is always some inflow to the dam$)$.}
\item{The initial water level of the dam is bounded, i.e. $\bb{V}(1) < \infty$.}
\end{itemize}
\end{assumption}

Note that most of the theory on conjugate duality is formulated for convex functions. However, the results can readily be rewritten to the concave case. Since our primal problem~\eqref{eq: problem_max} is a maximization problem, we consider the concave version of the theory. By using this theory, we can prove that there is no duality gap for our problem:

\begin{theorem}
\label{thm: strong_dual}
There is no duality gap for our problem, i.e., the optimal value of problem~\eqref{eq: problem} is equal to the optimal value of problem~\eqref{eq: dual}. Also, there exists a $\bar{y} \in Y$ which solves the dual problem.
\end{theorem}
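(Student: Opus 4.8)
The plan is to work within Rockafellar's conjugate duality framework in its concave form, reducing both the no-gap claim and dual attainment to a single regularity property of the optimal value function at the origin. I would define the (concave) optimal value function $\varphi(\bb{u}) := \sup_{\bb{D} \in \mc{D}_{\mc{G}}} F(\bb{D},\bb{u})$, so that the primal optimal value is $\varphi(\bb{0})$. Weak duality, always valid in this framework, already gives that $\varphi(\bb{0})$ is bounded above by the dual optimal value $\inf_{\bb{y}\geq\bb{0}} g(\bb{y})$ of problem~\eqref{eq: dual}. The standard route to the theorem is then: if $\varphi$ is concave and upper semicontinuous at $\bb{0}$ with nonempty superdifferential $\partial\varphi(\bb{0})$, there is no duality gap and every $\bar{\bb{y}} \in \partial\varphi(\bb{0})$ solves~\eqref{eq: dual}. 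It therefore suffices to verify that $\varphi$ is finite on a neighbourhood of $\bb{0}$, since a concave function that is finite and bounded on an open set is continuous there and hence superdifferentiable.

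First I would establish concavity and properness of $\varphi$. The objective in $F$ is linear in $\bb{D}$ and the constraints~\eqref{eq: perturb} are jointly linear in $(\bb{D},\bb{u})$, so $F$ is concave in $(\bb{D},\bb{u})$ and $\varphi$ is concave as a partial supremum of a concave function. For properness, feasibility of the unperturbed problem (for instance the admissible control $\bb{D} \equiv \bb{0}$, which is $\{\mc{G}_t\}$-adapted and, under Assumption~\ref{assumption: bounded}, satisfies all constraints) gives $\varphi(\bb{0}) > -\infty$, while the boundedness of $\bb{S}$, $\bb{R}$ and $\bb{V}(1)$ bounds the objective uniformly, giving $\varphi(\bb{0}) < \infty$.

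The main step is to show $\bb{0} \in \mbox{int}(\dom\varphi)$, i.e. that the perturbed problem remains feasible with finite value for every $\bb{u} \in U$ of sufficiently small $L^p$-norm. Finiteness of the value is inherited from the uniform bound on the objective, which does not depend on $\bb{u}$. The delicate part is feasibility: the assumption $R_i(t) > 0$ for all $t$, together with the boundedness of the initial level and inflow, is exactly what supplies the strict slack needed for a Slater-type condition. With strictly positive inflow the constraints $\bb{D}(t) \geq \bb{0}$, $\bb{D}(t) \leq \bb{b}$ and the storage bounds admit an adapted control satisfying them with strict inequality, so a small perturbation $\bb{u}$ can be absorbed without destroying feasibility. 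Once finiteness on a neighbourhood is established, continuity of the concave $\varphi$ at $\bb{0}$ follows, hence $\partial\varphi(\bb{0}) \neq \emptyset$, and the theory of Rockafellar~\cite{Rockafellar} yields both the absence of a duality gap and the existence of a dual maximiser $\bar{\bb{y}}$.

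I expect the main obstacle to be precisely the verification of this interior/continuity condition in the infinite-dimensional $L^p$--$L^q$ setting. In finite dimensions a single strictly feasible point suffices, but over $L^p(\Omega,\mc{F},P)$ one must produce, for every perturbation in a norm ball, a $\{\mc{G}_t\}$-adapted control meeting the perturbed constraints, uniformly in $\bb{u}$; the strict positivity of the inflow and the boundedness hypotheses of Assumption~\ref{assumption: bounded} are what make this uniform Slater condition hold. Keeping the adaptedness (measurability) constraint intact while adjusting the control to absorb the perturbation is the subtle point, and is where the hypotheses of the theorem are genuinely used.
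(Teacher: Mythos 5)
Your proposal is correct and follows essentially the same route as the paper: both reduce the theorem to Rockafellar's Theorems 17 and 18(a) in concave form by verifying a Slater-type boundedness condition at the origin, using the strict positivity of the inflow and the boundedness hypotheses of Assumption~\ref{assumption: bounded} to produce a $\{\mc{G}_t\}$-adapted control satisfying the constraints~\eqref{eq: perturb} with strict slack, so that $F(\bb{D},\cdot)$ (equivalently the value function $\varphi$) stays bounded below on a norm ball around $\bb{0}$. The only ingredient the paper adds to what you describe abstractly is the explicit strictly feasible control $D_i(t)=\min\{R_i(t)-\epsilon,\,V_i(1)-\epsilon,\,b_i-\epsilon\}$ with $\epsilon$ small enough that $D_i(t)>0$.
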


\begin{proof}
It follows from Example 1 in Pennanen~\cite{Pennanen} and Example 14.29 in Rockafellar and Wets~\cite{RockafellarWets} that our choice of $-F$ is in fact a convex normal integrand, and in particular, it is convex. Hence, the perturbation function $F$ (which we have chosen) is concave.

From Theorem 17 and Theorem 18 a) in Rockafellar~\cite{Rockafellar} rewritten to the concave case, we find that if $F$ is concave and there exists a $\bb{D} \in \mc{D}_{\mc{G}}$ such that the function $\bb{u} \mapsto F(\bb{D}, \bb{u})$ is bounded below on a neighborhood of $0$, the primal and dual optimal values coincide and there exists a $\bar{y} \in Y$ which solves the dual problem.

Since we know that our choice of $F$ is concave, the theorem will follow if we can find $\bb{D} \in \mc{D}_{\mc{G}}$ such that the function $\bb{u} \mapsto F(\bb{D}, \bb{u})$ is bounded below on a neighborhood of $0$.

The only possible problem is the case where $F(\bb{D}, \bb{u}) = -\infty$. This follows because in the other case, $F$ is bounded from below by definition (see equation~\eqref{eq: perturb}) and the assumptions on the stochastic processes involved (see Assumption~\ref{assumption: bounded}).

This means that we have to find a $\{\bb{D}(t)\}_{t \in [0,T]}$ such that for all $\bb{u}$ in a neighborhood of $\bb{0}$, we avoid the case where $F(\bb{D}, \bb{u}) = -\infty$. That means that for our choice of $\bb{D}(t)$, the constraints of equation~\eqref{eq: perturb} have to be satisfied for all such $\bb{u}$. However, this can be achieved by choosing $D_i(t) = \min\{ R_i(t)-\epsilon, V_i(1)-\epsilon, b_i - \epsilon \}$, where epsilon is chosen to be so small that this $D_i(t) > 0$ for all $i=1, \ldots, N, t=1, \ldots, T$. Then, all the constraints of equation~\eqref{eq: perturb} are satisfied for all $\bb{u}$ in an $\epsilon$-neighborhood around $\bb{0}$, and hence $F$ is bounded below on this neighborhood. 

Hence, it follows that there is no duality gap, i.e., that the optimal value of the primal problem~\eqref{eq: problem} is equal to the optimal value of the dual problem~\eqref{eq: dual}.
\end{proof}

\begin{remark}
\label{remark: dual_vs_primal}
 {\rm A weakness of the solving the dual problem instead of the primal problem is that we do not get the optimal drain process directly from the dual. However, in practice when $\Omega$ is finite, one could for instance solve the dual problem by using a dual simplex algorithm. Such an algorithm would provide the optimal draining strategy directly as shadow prices of the dual constrains.} 
\end{remark}

In the case where $\Omega$ is finite, it is typically more efficient to solve the dual problem than it is to solve the primal (from a computational point of view) when the number of constraints in the primal problem is greater than the number of variables. This is the case for the our problem: The number of variables is $N (T-1)|\Omega|$, while the number of constraints is $4 N (T-1)|\Omega|$. Hence, there are $3 N (T-1)|\Omega|$  more constraints than variables. For a large $T$ and $|\Omega|$, this is substantial and it will be faster to use the dual simplex algorithm than using the primal simplex method.

\begin{remark}
{\rm  Note that instead of having the terminal value of water equal to $\alpha \bb{S}(T) \cdot \bb{V}(T)$, we could consider some function $K(\bb{V}(T))$, $K : \mb{R}^N \rightarrow \mb{R}$ of the terminal water level as in Alais et al.~\cite{Alais}. However, since our solution technique eliminates the water level process $\{\bb{V}(t)\}$, we get that
 \[
  K(\bb{V}(T)) = K \big(\sum_{s=1}^{T-1}( \bb{R}(s)- \bb{D}(s)) + \bb{V}(1) \big). 
 \]
In order to be able to use the solution method we have presented, we need to be able to separate out $D_i(t)$ in order to separate the maximizations in order to derive the dual value function. Hence, we need $K$ to be a linear function.}
\end{remark}

\section{A constraint on maximal total production}
\label{sec: max_production}

In this section, we consider the same problem as in Section~\ref{sec: model}, but instead of having constraints on the maximal production capacity of each dam individually, we introduce a constraint on the total production of the whole hydro-dam system. Hence, we can imagine that the system has only one common turbine for all the dams, such as in Figure~\ref{fig: felles_turbin}, instead of having $N$ different turbines as in Figure~\ref{fig: Ndams}.


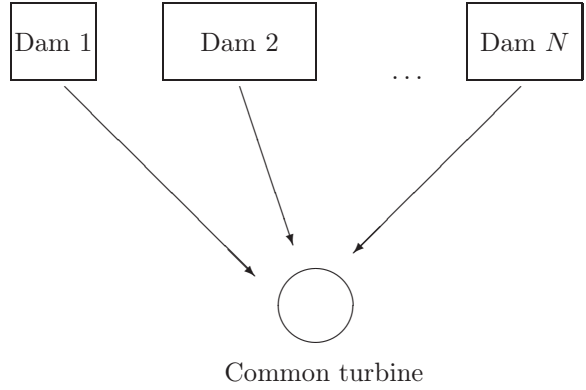
\begin{figure}
\center
\setlength{\unitlength}{1cm}
\begin{picture}(7,5)
\put(0,4){\framebox(1.1,1){Dam $1$}}
\put(2,4){\framebox(2,1){Dam $2$}}
\put(5,4){\mbox{\ldots}}
\put(6,4){\framebox(1.5,1){Dam $N$}}
\put(0.7,3.9){\vector(1,-1){2.5}}
\put(3,3.9){\vector(1,-3){0.7}}
\put(6.7,3.9){\vector(-1,-1){2.2}}
\put(4,1){\circle{1}}
\put(2.8,0){\mbox{Common turbine}}
\end{picture}
\caption{The hydroelectric dam system: Constrained maximal total production}
\label{fig: felles_turbin}
\end{figure}


This means that the question of how to distribute the production between the dams becomes important. This is similar to the problem in Huseby~\cite{Huseby}, however the maximization objective in our paper is different from the one in \cite{Huseby}, and our framework is more general. 

The new problem of the dam manager is:

\begin{equation}
\label{eq: problem_max}
\begin{array}{rlll}
\max_{\{\bb{D(t)}\}} &&E[\sum_{t=1}^{T-1} \bb{D}(t) \cdot \bb{S}(t+1) + \alpha \bb{V}(T) \cdot \bb{S}(T)] \\[\smallskipamount]
\mbox{such that}\\[\smallskipamount]
\bb{V}(t+1) &=& \bb{V}(t) + \bb{R}(t) - \bb{D}(t), \quad t=1, \ldots, T-1 \mbox{ a.s.}, \\[\smallskipamount]
\bb{D}(t)  &\leq& \bb{V}(t) \leq \bb{m}, \quad t=1, \ldots,T-1 \mbox{ a.s.}, \\[\smallskipamount]
0 &\leq& D_i(t), \quad i=1, \ldots, N \mbox{ and } t=1, \ldots, T-1 \mbox{ a.s.} \\[\smallskipamount]
\sum_{i=1}^N D_i(t) &\leq& \tilde{C}, \quad t=1, \ldots, T-1 \mbox{ a.s.}
\end{array}
\end{equation}
\noindent where, like previously, we maximize over all drain processes $\bb{D} \in \mc{D}_{\mc{G}}$, the initial water level $\bb{V}(1)$ is given and $\alpha, \tilde{C}$ are constants. Note that the only difference between problems~\eqref{eq: problem} and \eqref{eq: problem_max} is that the maximum production constraint on each dam is replaced by a maximal total production constraint for the whole facility (the final constraint of problem~\eqref{eq: problem_max}).

Note that there are $(N-1)(T-1)|\Omega|$ more constraints in problem~\eqref{eq: problem_max} than there are in problem~\eqref{eq: problem}, but the two problems have the same number of decision variables. Hence, from the comments after Remark~\ref{remark: dual_vs_primal}, we see that from a computational point of view, using a dual method is less profitable in this case, than for our original problem~\eqref{eq: problem}. However, since there are $N(T-1)|\Omega|$ variables and $(3N + 1)(T-1)|\Omega|$ constraints in \eqref{eq: problem_max}, dual methods are still faster than primal methods.

Problem~\eqref{eq: problem_max} can be rewritten in the same way as problem~\eqref{eq: problem} in Section~\ref{sec: model} by eliminating the water level process. The rewritten problem is

\[
\begin{array}{lll}
\label{eq: problem_max_omskrevet}
\max_{\bb{D}} \hspace{0.05cm} E[\sum_{t=1}^{T-1} \bb{D}(t) \cdot \bb{S}(t+1) + \alpha \bb{S}(T) \cdot \Big(\sum_{s=1}^{T-1} (\bb{R}(s)- \bb{D}(s)) + \bb{V}(1)\Big) ] \\[\smallskipamount]
\mbox{such that}
\end{array}
\]
\begin{equation}
\begin{array}{rlll}
\bb{D}(t)  &\leq& \sum_{s=1}^{t-1} (\bb{R}(s)- \bb{D}(s)) + \bb{V}(1) \leq \bb{m}, \quad t=1, \ldots,T-1 \mbox{ a.s.}, \\[\smallskipamount]
0 &\leq& D_i(t), \quad i=1, \ldots, N \mbox{ and } t=1, \ldots, T-1 \mbox{ a.s.} \\[\smallskipamount]
\sum_{i=1}^N D_i(t) &\leq& \tilde{C},  \quad t=1, \ldots, T-1 \mbox{ a.s.}
\end{array}
\end{equation}
\noindent Like previously, the maximization is over all $\bb{D} \in \mc{D}_{\mc{G}}$.

The dual problem can be derived in just as in Section~\ref{sec: dual}. We omit writing out all details, as the approach is nearly identical to the one already presented.

The Lagrange function is

\[
\begin{array}{lll}
 K(\bb{D},\bb{v}) &=  E[\sum_{t=1}^{T-1} \bb{D}(t) \cdot \bb{S}(t+1) + \alpha \bb{S}(T) \cdot \Big( \sum_{t=1}^{T-1}(\bb{R}(t)- \bb{D}(t)) + \bb{V}(1)\Big)] \\[\smallskipamount]
 & + E[\sum_{t=1}^{T-1} \gamma \cdot \bb{D}(t)] + E[\sum_{t=1}^{T-1} v_{t} (\tilde{C} - +\sum_{i=1}^N D_i(t))] \\[\smallskipamount]
 &+ E[\sum_{t=1}^{T-1} \lambda_{t} \cdot (\bb{V}(1) + \sum_{s=1}^{t-1} (\bb{R}(s) - \bb{D}(s)) - \bb{D}(t))] \\[\smallskipamount]
 & + E[\sum_{t=1}^{T-1} \bb{w}_{t} \cdot \big(\bb{m} - \bb{V}(1)- \sum_{s=1}^{t-1} (\bb{R}(s) - \bb{D}(s))\big)]
 \end{array}
\]

The dual objective function is

\[
 \begin{array}{lll}
  g(\bb{v}) &= \sup_{\bb{D}} K(\bb{D},\bb{v}) \\[\smallskipamount]
  &=E[ \alpha \bb{V}(1) \cdot \bb{S}(T) + \sum_{t=1}^{T-1} \bb{V}(1) \cdot \lambda_t + \alpha \sum_{t=1}^{T-1}  \bb{S}(T) \cdot \bb{R}(t) + \tilde{C}\sum_{t=1}^{T-1} v_t  \\[\smallskipamount]
  & + \sum_{t=1}^{T-1} \bb{w}_t \cdot \big(\bb{m} - \bb{V}(1) -\sum_{s=1}^{t-1} \bb{R}(s) \big) + \sum_{t=1}^{T-1}\sum_{s=1}^{t-1} \lambda_t \cdot \bb{R}(s)] \\[\smallskipamount]
  &+ \sum_{t=1}^{T-1} \sum_{i=1}^{N} \sup_{D_i(t)}  \tilde{F}(D_i(t))
 \end{array}
\]
\noindent where $\lambda, \bb{v}, \gamma, \bb{w} \geq 0$ and 

\[
 \tilde{F}(D_i(t))= E[D_i(t) \{S_i(t+1) - \alpha S_i(T) + \gamma_{i,t} - v_{t} - \lambda_{i,t}  + \sum_{s=t+1}^{T-1} \big( w_{i,s} -  \lambda_{i,s} \big) \}]
\]

From Lemma 2.1 and 2.2 in Dahl~\cite{Dahl}, it follows that the dual problem is equivalent to
\begin{equation}
\label{eq: dual_max}
 \begin{array}{lll}
 C + \inf_{\textbf{y} \geq 0} \sum_{t=1}^{T-1}E[\lambda_t \cdot \{\bb{V}(1) + \sum_{s=1}^{t-1} \bb{R}(s)\} \\[\smallskipamount]
  \hspace{3.5cm} + \tilde{C}\sum_{t=1}^{T-1} v_t + \bb{w}_t \cdot \{ \bb{m} - \bb{V}(1) -\sum_{s=1}^{t-1} \bb{R}(s) \}] \\[\smallskipamount]
  \mbox{such that} \\[\smallskipamount]
  \int_A \{ S_i(t+1) - \alpha S_i(T) + \gamma_{i,t} - v_{t} - \sum_{s=t}^{T-1} \lambda_{s} + \sum_{s=t+1}^{T-1} w_{i,s}\}dP =\bb{0} \mbox{ } \forall \mbox{ } A \in \mc{G}_t, \\[\smallskipamount]
  \hspace{9.4cm} i=1, \ldots, N,
 \end{array}
\end{equation}
\noindent where $C:= \alpha E[ \bb{S}(T) \cdot \bb{V}(1) + \sum_{t=1}^{T-1} \bb{S}(T) \cdot \bb{R}(t)]$ and the constraint holds for all $t=1, \ldots, T-1$. 

Note that for each time $t$, the same variable $v_t$ is in all of the dual constraints (that is, for $i=1, \ldots, N$). This is the main difference between the original dual problem~\eqref{eq: dual} and the dual problem~\eqref{eq: dual_max} for the total maximum production constraint, and provides less flexibility for the dam manager. In the finite scenario space case, where we know that strong duality holds, for $\tilde{C}=\sum_{i=1}^N b_i$, this implies that the optimal value of problem~\eqref{eq: problem_max} for the total production constraint is less than or equal the optimal value for problem~\eqref{eq: problem} for the individual dam constraints. This is what we would expect.

Also, note that in the special case discussed after problem \eqref{eq: dual}, the new dual problem~\eqref{eq: dual_max} is also simple to solve, and the solution is the same as the one found in Section~\ref{sec: dual}. 

Again, we can prove that strong duality holds in the arbitrary $\Omega$ case for problems \eqref{eq: problem_max} and \eqref{eq: dual_max} in the same way as in Section~\ref{sec: strong_dual}. Like previously, in the finite scenario space case, we get strong duality directly from the LP duality  theorem (or as a special case of the strong duality proof of Section~\ref{sec: strong_dual}).

\section{Transfer of water between the dams}
\label{sec: system}

To make our problem more realistic, we add a network structure to the model. To simplify, we consider two dams placed after one another as shown in Figure~\ref{fig: dam}. The first dam lies above the second one. Hence, we can release water from the first dam to the second dam, but not the other way around. Both the dams have turbines with some given maximum capacities. In addition, from the second dam, we can choose to release excess water into the ocean if the dam is about to flood. This cannot be done directly from dam $1$ (only indirectly by first transferring the water to dam $2$) and then releasing it.


\begin{figure}
\center
\setlength{\unitlength}{1cm}
\begin{picture}(8,7)
\put(0,4){\framebox(1.1,1){Dam $1$}}
\put(2.5,3.3){\framebox(2,1){Dam $2$}}
\put(1.3,3.8){\circle{0.5}}
\put(4.7,3.1){\circle{0.5}}
\put(0,3){\mbox{Turbine $1$}}
\put(3.3,2.5){\mbox{Turbine $2$}}
\put(1.5,4.4){\vector(2,-1){0.7}}
\put(1.6,3.5){\vector(1,-1){1.2}}
\put(5,2.8){\vector(1,-1){0.7}}
\put(0,2){\line(1,0){8}}
\put(3.2,1.2){\mbox{Ocean}}
\end{picture}
\caption{The hydroelectric dam system: Transfer of water between dams.}
\label{fig: dam}
\end{figure}
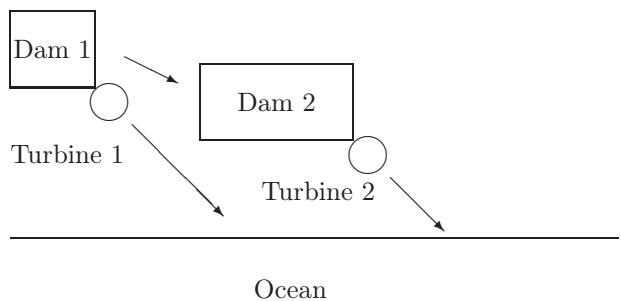


The water from each dam which is released through the respective turbine at time $t$ is, like before, denoted by $D_i(t)$, $i=1,2, t= 1, 2, \ldots, T-1$. The water transferred from dam $1$ to dam $2$ is denoted by $\bar{T}(t)$, $t= 1, 2, \ldots, T-1$ (the T stands for ``transfer''). Let $M_{\bar{T}} > 0$ be a given real number. We require that $0 \leq \bar{T} \leq M_{\bar{T}}$, i.e., one can maximally transfer $M_{\bar{T}}$ units of water from dam $1$ to dam $2$ at each time. 

The amount of water let out from dam $2$ into the ocean at time $t$ is denoted by $O(t)$, $t=1, 2, \ldots, T-1$ (the O stands for ``out''). Let $N_O > 0$ be a real number We require that $0 \leq O(t) \leq N_O$, i.e., that there is a maximum amount of water which can be released at each time. 

We want to study the same problem as in Section~\ref{sec: model}, but adapted to our new system setting. The problem can be written similarly as in Section~\ref{sec: model}, however we need to add the new constraints on $\bar{T}(t), O(t)$ and take into account that the dynamics of the water levels in the dams have changed due to the added network structure:

\begin{equation}
\label{eq: problem_paa_nytt}
\begin{array}{rlll}
\max_{\{\bb{D(t)}, \bar{T}, O\}} &&E[\sum_{t=1}^{T-1} \bb{D}(t) \cdot \bb{S}(t+1) + \alpha \bb{V}(T) \cdot \bb{S}(T)] \\[\smallskipamount]
\mbox{such that}\\[\smallskipamount]
V_1(t+1) &=& V_1(t) + R_1(t) - D_1(t) - \bar{T}(t),\quad t=1, \ldots, T-1 \mbox{ a.s.}, \\[\smallskipamount]
V_2(t+1) &=& V_2(t) + R_2(t) + \bar{T}(t) - D_2(t) -O(t),\quad t=1, \ldots, T-1 \mbox{ a.s.}, \\[\smallskipamount]
\bb{0} &\leq& \bb{D}(t) \leq \bb{b}, \mbox{ } \bb{D}(t)  \leq \bb{V}(t) \leq \bb{m},\quad t=1, \ldots, T-1 \mbox{ a.s.} \\[\smallskipamount]
0 &\leq& \bar{T}(t) \leq M_{\bar{T}}, \mbox{ } 0 \leq O(t) \leq N_0, \quad t=1, \ldots, T-1 \mbox{ a.s.} 
\end{array}
\end{equation}
\noindent Note that the processes $\{\bar{T}(t)\}$ and $\{O(t)\}$ are controls, i.e., they can be chosen by the dam manager. Just as for the drain processes $D_1(t)$ and $D_2(t)$, we assume that $\{\bar{T}(t)\}$ and $\{O(t)\}$ are adapted to the information filtration $\mc{G}$ of the dam manager.

It turns out that we can use the same approach as in Sections~\ref{sec: model}-\ref{sec: dual} to rewrite problem~\eqref{eq: problem_paa_nytt} and then derive the corresponding dual problem. However, note that due to the system structure, we now have

\[
\begin{array}{lll}
 \Delta V_1(t) &=& V_1(t+1) - V_1(t) \\[\smallskipamount]
 &=& R_1(t) - D_1(t) - \bar{T}(t).
\end{array}
\]

Similarly, $\Delta V_2(t) = R_2(t) + \bar{T}(t) - D_2(t) - O(t)$. Hence, by the same kind of calculations as in~\eqref{eq: mellomregning}, we find that

\begin{equation}
\label{eq: V}
\begin{array}{lll}
 V_1(t) &=& V_1(1) + \sum_{t=1}^{T-1} \{R_1(t) - \bar{T}(t) - D_2(t)\} \\[\smallskipamount]
 V_2(t) &=& V_2(1) + \sum_{t=1}^{T-1} \{R_2(t) + \bar{T}(t) - D_2(t)- O(t)\}.
\end{array}
\end{equation}

By proceeding precisely as in Section~\ref{sec: dual}, we can derive the corresponding perturbation function, Lagrange function and dual objective function. We omit writing out the perturbation function as it is lengthy and very similar to the one in Section~\ref{sec: dual}. The Lagrange function is:

\[
\begin{array}{lll}
 K(\bb{D},\bar{T}, O, \bb{y}) &=  E[\sum_{t=1}^{T-1} \bb{D}(t) \cdot \bb{S}(t+1) + \alpha \bb{S}(T) \cdot \bb{V}(T)]\\[\smallskipamount]
 & + E[\sum_{t=1}^{T-1} \bb{v}_{t} \cdot (\bb{b} - \bb{D}(t))] + E[\sum_{t=1}^{T-1} \lambda_{t} \cdot (\bb{V}(t) - \bb{D}(t))] \\[\smallskipamount]
 & + E[\sum_{t=1}^{T-1} \bb{w}_{t} \cdot \big(\bb{m} - \bb{V}(t)\big)]
 \end{array}
\]
\noindent where $\bb{V}(t)$ should be replaced with the expressions in equation~\eqref{eq: V} to get a Lagrange function only depending on the control processes $\bb{D}(t), \bar{T}(t), O(t)$, the input processes $\bb{R}(t), \bb{S}(t)$ and the initial water levels in the dams $\bb{V}(1)$.

The dual objective function is 

\[
 \begin{array}{lll}
  g(\bb{y}) &= \sup_{\bb{D}} K(\bb{D},\bb{y}) \\[\smallskipamount]
  &=E[ \alpha \bb{V}(1) \cdot \bb{S}(T)] + \sum_{t=1}^{T-1} E[ \bb{V}(1) \cdot \lambda_t + \alpha   \bb{S}(T) \cdot \bb{R}(t) +  \bb{v}_t \cdot \bb{b} \\[\smallskipamount]
  & + v_{\bar{T}}(t)M_{\bar{T}} + v_{O}(t)N_{O} + \bb{w}_t \cdot \big(\bb{m} - \bb{V}(1) -\sum_{s=1}^{t-1} \bb{R}(s) \big) + \sum_{s=1}^{t-1} \lambda_t \cdot \bb{R}(s) ] \\[\smallskipamount]
  &+ \sum_{t=1}^{T-1} \{ \sup_{\bar{T}(t)} \bar{K}(\bar{T}(t)) +  \sup_{O(t)} G(O(t)) + \sum_{i=1}^{N}  \sup_{D_i(t)} \tilde{F}(D_i(t))  \} 
 \end{array}
\]
\noindent where $\lambda, \bb{v}, \gamma, \bb{w} \geq 0$ and for $i=1,2$,

\[
\begin{array}{lll}
 \tilde{F}(D_i(t)) &=& E[D_i(t) \{S_i(t+1) - \alpha S_i(T) + \gamma_{i,t} - v_{i,t} - \lambda_{i,t}  + \sum_{s=t+1}^{T-1} \big( w_{i,s} -  \lambda_{i,s} \big) \}],\\[\smallskipamount]
 \bar{K}(\bar{T}(t)) &=& \bar{T}(t) \{ \alpha(S_2(T) - S_1(T)) + \sum_{s=1}^{t-1} [\lambda_s^{(2)} - \lambda_s^{(1)}] + \sum_{s=1}^{t-1} [w_s^{(2)} - w_s^{(1)}] \\[\smallskipamount]
 &&+ \gamma_{\bar{T}}(t) + v_{\bar{T}}(t)\}, \\[\smallskipamount]
G(O(t)) &=& O(t) \{-\alpha S_2(T) + \sum_{s=1}^{t-1} w_s^{(2)}- \sum_{s=1}^{t-1} \lambda_s^{(2)} + \gamma_O(t) - v_O(t)  \}. 
\end{array}
 \]

From Lemma 2.1 and 2.2 in Dahl~\cite{Dahl}, it follows that the dual problem is equivalent to
\begin{equation}
\label{eq: dual_system}
 \begin{array}{lll}
 C + \inf_{\textbf{y} \geq 0} \sum_{t=1}^{T-1}E[\lambda_t \cdot \{\bb{V}(1) + \sum_{s=1}^{t-1} \bb{R}(s)\} + v_{\bar{T}}(t)M_{\bar{T}} + v_{O}(t)N_{O} \\[\smallskipamount]
  \hspace{3.5cm}+ \bb{v}_t \cdot \bb{b} + \bb{w}_t \cdot \{ \bb{m} - \bb{V}(1) -\sum_{s=1}^{t-1} \bb{R}(s) \}] \\[\smallskipamount]
  \mbox{such that} \\[\smallskipamount]
  \int_A \{ \bb{S}(t+1) - \alpha \bb{S}(T) + \gamma_{t} - \bb{v}_{t} - \sum_{s=t}^{T-1} \lambda_{s} + \sum_{s=t+1}^{T-1} \bb{w}_{s}\}dP =\bb{0} \mbox{ } \forall \mbox{ } A \in \mc{G}_t, \\[\smallskipamount]
  \int_A \{\alpha(S_2(T) - S_1(T)) + \sum_{s=1}^{t-1} [\lambda_s^{(2)} - \lambda_s^{(1)}] + \sum_{s=1}^{t-1} [w_s^{(2)} - w_s^{(1)}] \\[\smallskipamount]
  \hspace{5cm} + \gamma_{\bar{T}}(t) + v_{\bar{T}}(t) \} dP =0 \mbox{ for all }  A \in \mc{G}_t, \\[\smallskipamount]
  \int_A \{-\alpha S_2(T) + \sum_{s=1}^{t-1} w_s^{(2)}- \sum_{s=1}^{t-1} \lambda_s^{(2)} + \gamma_O(t) - v_O(t) \} dP = 0 \mbox{ for all } A \in \mc{G}_t. 
 \end{array}
\end{equation}
\noindent where $C:= \alpha E[ \bb{S}(T) \cdot \bb{V}(1) + \sum_{t=1}^{T-1} \bb{S}(T) \cdot \bb{R}(t)]$ and the constraints hold for all $t=1, \ldots, T-1$.

Note that the duality approach still works precisely as in Section~\ref{sec: dual}, despite the added complexity of the structure between the dams. The same would be true for alternative system structure between the dams. The general framework of conjugate duality allows up to upper bound our problem for all such structures. However, if the system structure is very complex, the dual problem will also be more complex. In problem~\eqref{eq: dual_system}, we see that the added complexity leads to two extra constraints (corresponding essentially to the two added control variables). In addition, the dual objective function has two extra terms added compared to the dual problem \eqref{eq: dual} of Section~\ref{sec: dual}. However, these are of a very simple form.

Note also that in problem~\eqref{eq: dual}, the constraints are fairly simple because constraint $i$ only depends on $S_i(t)$. This means that the constraints are all separate. This is natural, as the dams are also assumed to be detached from one another. In problem~\eqref{eq: dual_system}, because transferring water between the dams is possible, we see that this is reflected by the fact that the dual constraints are connected (f.ex. both $S_1(T)$ and $S_2(T)$ are a part of the second constraint in equation~\eqref{eq: dual_system}).

Like before, the dual problem is simple to solve in some special cases. We assume that $\bb{R}(t) \geq 0$ for all times $t$ (i.e., there is no natural draining or evaporation from the dams) and assume that $\alpha =1$. Also, assume that the electricity price process $\bb{S}(t)$ is a martingale w.r.t. the partial information filtration $(\mc{G}_t)_{t=1}^{T-1}$ and that the dams' technologies are identical, so $S_1(t)=S_2(t)$ for all $t=1, \ldots, T-1$. Finally, assume that

\begin{equation}
\label{eq: assume_system}
\bb{m} - \bb{V}(1) -\sum_{s=1}^{t-1} \bb{R}(s)  \geq \bb{0} \mbox{ a.s.}
\end{equation}

\noindent for all times $t=1, \ldots, T-1$ (i.e., almost surely, none of the dams will flood even if we do not drain any water). Since the price process is a martingale and because of assumption~\eqref{eq: assume_system}, we see that the optimal solution of the dual problem is to choose $\lambda = \bb{v}=\gamma=\bb{w}=\bb{0}$. Due to the martingale property, this choice implies that the constraints are satisfied and the dual optimal value, $d^*$, is the lowest it can possibly be:

\[
d^*= E[ \bb{S}(T) \cdot \bb{V}(1) + \sum_{t=1}^{T-1} \bb{S}(T) \cdot \bb{R}(t)]. 
\]

Note that in this case there is no difference in the optimal solution of problem~\eqref{eq: dual} and problem~\eqref{eq: dual_system}. However, this is quite natural due to the strict assumptions made in order to derive this solution. In a completely corresponding way as in Section~\ref{sec: strong_dual}, we can prove that strong duality holds for this modified setting. Hence, the optimal value of problem~\eqref{eq: dual_system} is equal to the optimal value of the original problem~\eqref{eq: problem_paa_nytt}. 


\begin{remark}
{\rm As an alternative to the duality method, we could try to use a more direct approach and solve the primal problem~\eqref{eq: problem_paa_nytt} directly. The natural idea would be to use dynamic programming. However, the constraints in problem~\eqref{eq: problem} complicate this method significantly. According to Dohrman and Robinett~\cite{Dohrman}, even in the deterministic case, inequality constraints such as the those in equation~\eqref{eq: problem}, demand more sophisticated methods than unconstrained or equality constrained problems. One complicating factor is to determine which of the constraints are binding (i.e., hold with equality) in the optimum.

In Dahl and Stokkereit~\cite{DahlStokkereit}, a method combining Lagrange duality and some method of stochastic control, for instance dynamic programming, is derived. However, this approach is based on having equality constraints, and the proofs of that paper no longer work when considering inequality constraints instead. As already mentioned, in the case where the scenario space $\Omega$ is assumed to be finite, the primal problem~\eqref{eq: problem} is a linear programming problem which can be solved efficiently by well-known methods (see for example Vanderbei~\cite{Vanderbei}). Hence, the difficulty is to handle the case where $\Omega$ is not finite. 

In the deterministic case, there are ways to overcome this problem and find the optimal control under the constraints; active set theory, projected Newton methods or interior point methods, see Dohrman and Robinett~\cite{Dohrman}. In particular, for ``box''-type constraints, projected Newton algorithms have been shown to be efficient. However, to the best of our knowledge, such algorithms have not been generalized to the stochastic setting. }



\end{remark}

\appendix

\section{Conjugate duality and paired spaces}
\label{sec: conjugate}

This appendix is almost the same as the one in Dahl~\cite{Dahl}, and is included for the reader's convenience.

Conjugate duality theory (also called convex duality), introduced by Rockafellar~\cite{Rockafellar}, provides a method for solving very general optimization problems via dual problems. The following theory is, as is common in optimization literature, formulated for minimization problems. However, it can easily be translated to a maximization context by using that $\min f(x) = - \max -f(x)$.

Let $X$ be a linear space, and let $f: X \rightarrow \mathbb{R}$ be a function. The minimization problem $\min_{x \in X} f(x)$ is called the \emph{primal problem}, denoted $(P)$. In order to apply the conjugate duality method to the primal problem, we consider an abstract optimization problem $\min_{x \in X} F(x,u)$ where $F: X \times U \rightarrow \mathbb{R}$\index{$F(x,u)$} is a function such that $F(x,0) = f(x)$, $U$ is a linear space and $u \in U$ is a parameter chosen depending on the particular problem at hand. The function $F$ is called the \emph{perturbation function}. We would like to choose $(F,U)$ such that $F$ is a closed, jointly convex function of $x$ and $u$.

Corresponding to this problem, one defines the \emph{optimal value function}
\begin{equation}
 \varphi(u)\index{$\varphi(\cdot)$} := \inf_{x \in X} F(x,u) \hspace{0,05cm}, \hspace{0,3cm} u \in U.
\end{equation}
Note that if the perturbation function $F$ is jointly convex, then the optimal value function $\varphi(\cdot)$ is convex as well.

A \emph{pairing} of two linear spaces $X$ and $V$ is a real-valued bilinear form $\langle \cdot , \cdot \rangle$ on $X \times V$. Assume there is a pairing between the spaces $X$ and $V$. A topology on $X$ is \emph{compatible} with the pairing if it is a locally convex topology such that the linear function $\langle \cdot , v \rangle$ is continuous, and any continuous linear function on $X$ can be written in this form for some $v \in V$. A compatible topology on $V$ is defined similarly. The spaces $X$ and $V$ are \emph{paired spaces} if there is a pairing between $X$ and $V$ and the two spaces have compatible topologies with respect to the pairing. An example is the spaces $X = L^{p}(\Omega, F, P)$ and $V = L^{q}(\Omega, F, P)$, where $\frac{1}{p} + \frac{1}{q} = 1$. These spaces are paired via the bilinear form $\langle x , v \rangle = \int_{\Omega} x(s) v(s) dP(s)$.

In the following, let $X$ be paired with another linear space $V$, and $U$ paired with the linear space $Y$. The choice of pairings may be important in applications.  Define the \emph{Lagrange function} $K: X \times Y \rightarrow \bar{\mathbb{R}}$ to be $K(x,y) := \inf\{F(x,u) + \langle u,y \rangle : u \in U\}$. The following Theorem~\ref{thm: Lagrange} is from Rockafellar~\cite{Rockafellar} (see Theorem 6 in \cite{Rockafellar}).
\begin{theorem}
 \label{thm: Lagrange}
The Lagrange function $K$ is closed, concave in $y \in Y$ for each $x \in X$, and if $F(x,u)$ is closed and convex in $u$
\begin{equation}
 \label{eq:box}
f(x) = \sup_{y \in Y} K(x,y).
\end{equation}
\end{theorem}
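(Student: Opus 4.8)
The plan is to separate the three assertions, because concavity and closedness of $K(x,\cdot)$ hold for an \emph{arbitrary} perturbation function, whereas the identity $f(x)=\sup_{y}K(x,y)$ is precisely where the hypothesis that $u\mapsto F(x,u)$ is closed and convex must be used. For the concavity and closedness, I would fix $x\in X$ and read off from the definition that $K(x,y)=\inf_{u\in U}\{F(x,u)+\langle u,y\rangle\}$ is a pointwise infimum, over $u$, of the maps $y\mapsto F(x,u)+\langle u,y\rangle$. Each of these is affine in $y$: the term $F(x,u)$ is a constant, and $\langle u,\cdot\rangle$ is continuous and linear because the topology on $Y$ is compatible with the pairing. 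An affine function is simultaneously closed (upper semicontinuous) and concave, and an infimum of an arbitrary family of closed concave functions is again closed and concave. Hence $K(x,\cdot)$ inherits both properties, with no assumption on $F$ whatsoever.

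For the identity, I would fix $x$ and abbreviate $g(u):=F(x,u)$, so that $f(x)=g(0)$ and the goal becomes $\sup_{y}\inf_{u}\{g(u)+\langle u,y\rangle\}=g(0)$. The key move is to recognize the inner infimum as a conjugate: writing $g^{*}(y^{*}):=\sup_{u}\{\langle u,y^{*}\rangle-g(u)\}$ for the convex conjugate taken with respect to the pairing of $U$ with $Y$, one has $\inf_{u}\{g(u)+\langle u,y\rangle\}=-g^{*}(-y)$. Taking the supremum over $y\in Y$ and substituting $y^{*}=-y$ (which again ranges over all of $Y$, since $Y$ is a linear space) yields $\sup_{y}K(x,y)=\sup_{y^{*}}\{-g^{*}(y^{*})\}=\sup_{y^{*}}\{\langle 0,y^{*}\rangle-g^{*}(y^{*})\}=g^{**}(0)$, the biconjugate evaluated at the origin. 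The hypothesis then enters exactly once: since $g=F(x,\cdot)$ is closed and convex, the Fenchel--Moreau theorem (valid in the paired-spaces setting, as the topologies are locally convex and compatible with the pairing) gives $g^{**}=g$, whence $g^{**}(0)=g(0)=f(x)$.

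The step I expect to be the main obstacle is the invocation of Fenchel--Moreau together with the bookkeeping of the improper cases. The biconjugate theorem in its clean form needs $g$ to be \emph{proper}, and the identity $g^{**}=g$ can genuinely fail for closed convex functions that take the value $-\infty$ on a proper subset, so these degenerate cases must be handled by hand: if $g\equiv+\infty$ then both sides equal $+\infty$, and the $-\infty$-valued cases must be checked against the convention used for extended-real arithmetic so that $g(0)$ and $g^{**}(0)$ still agree. The second point of care is purely structural: every conjugate must be formed relative to the \emph{given} pairing of $U$ with $Y$, so that each continuous linear functional appearing in $g^{*}$ is represented by some $y^{*}\in Y$ and the supremum defining $g^{**}$ is taken over the correct space; this is exactly what compatibility of the topologies guarantees. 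With the properness bookkeeping settled and the pairing fixed, the three assertions combine to give the theorem.
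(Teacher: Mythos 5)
Your proposal is correct and is essentially the canonical argument: the paper itself gives no proof of this statement (it is Theorem 6 of Rockafellar's \emph{Conjugate Duality and Optimization}, cited rather than proved), and Rockafellar's proof is exactly your two steps --- $K(x,\cdot)$ is an infimum of affine functions of $y$, hence closed and concave, and $\sup_{y}K(x,y)=F(x,\cdot)^{**}(0)$, which equals $F(x,0)=f(x)$ by the biconjugation theorem when $F(x,\cdot)$ is closed and convex. Your caveat about improper functions is handled automatically by Rockafellar's convention that ``closed'' already incorporates the collapse of functions taking the value $-\infty$, so that $g^{**}=g$ holds for all closed convex $g$ without a separate properness hypothesis.
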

For the proof of this theorem, see Rockafellar~\cite{Rockafellar}. Motivated by Theorem~\ref{thm: Lagrange}, we define the \emph{dual problem} of $(P)$,
\begin{eqnarray}
 (D) \hspace{1cm} \max_{y \in Y} g(y) \nonumber
\end{eqnarray}
\noindent where $g(y) := \inf_{x \in X} K(x,y)$.

One reason why problem $(D)$ is called the dual of the primal problem $(P)$ is that, from equation (\ref{eq:box}), problem $(D)$ gives a lower bound on problem $(P)$. This is called \emph{weak duality}. Sometimes, one can prove that the primal and dual problems have the same optimal value. If this is the case, we say that there is \emph{no duality gap} and that \emph{strong duality holds}. The next theorem (see Theorem 7 in Rockafellar~\cite{Rockafellar}) is important:
\begin{theorem}
 \label{thm: dualprob}
The function $g$ in $(D)$ is closed and concave. Also
\begin{eqnarray}
\sup_{y \in Y} g(y) = \cl (\co (\varphi))(0)  \nonumber
\end{eqnarray}
and
\begin{eqnarray}
\inf_{x \in X} f(x) = \varphi(0). \nonumber
\end{eqnarray}
\end{theorem}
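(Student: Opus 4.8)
The plan is to reduce all three assertions to properties of the convex conjugate of the optimal value function $\varphi$ and then to invoke the Fenchel--Moreau biconjugation theorem. The third identity is immediate: since the perturbation function satisfies $F(x,0)=f(x)$ by construction, evaluating the definition $\varphi(u)=\inf_{x}F(x,u)$ at $u=0$ gives $\varphi(0)=\inf_{x}F(x,0)=\inf_{x}f(x)$, with nothing further to argue. For the structural claim that $g$ is closed and concave, I would observe that for each fixed $x$ the Lagrange function $K(x,y)=\inf_{u}\{F(x,u)+\langle u,y\rangle\}$ is an infimum over $u$ of maps that are affine, hence closed and concave, in $y$; an infimum of closed concave functions is closed and concave. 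This is exactly the first part of Theorem~\ref{thm: Lagrange}, which holds without any convexity hypothesis on $F$. Since $g(y)=\inf_{x}K(x,y)$ is a further infimum of closed concave functions of $y$, it inherits both properties.

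The heart of the argument is the identity $\sup_{y}g(y)=\cl(\co(\varphi))(0)$. First I would collapse the two infima defining $g$ into a joint infimum over $(x,u)$ and then regroup, noting that the pairing term $\langle u,y\rangle$ is free of $x$:
\[
g(y)=\inf_{x}K(x,y)=\inf_{x}\inf_{u}\{F(x,u)+\langle u,y\rangle\}=\inf_{u}\Big\{\inf_{x}F(x,u)+\langle u,y\rangle\Big\}=\inf_{u}\{\varphi(u)+\langle u,y\rangle\}.
\]
Recognizing the right-hand side as minus a conjugate, I would record that $g(y)=-\varphi^{*}(-y)$, where $\varphi^{*}(y):=\sup_{u}\{\langle u,y\rangle-\varphi(u)\}$ is the convex conjugate of $\varphi$ taken with respect to the pairing between $U$ and $Y$.

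Taking the supremum over $y\in Y$ and using that $-y$ ranges over all of $Y$ as $y$ does, I then obtain
\[
\sup_{y\in Y}g(y)=\sup_{y\in Y}\big(-\varphi^{*}(-y)\big)=-\inf_{y\in Y}\varphi^{*}(-y)=-\inf_{y\in Y}\varphi^{*}(y)=\sup_{y\in Y}\{\langle 0,y\rangle-\varphi^{*}(y)\}=\varphi^{**}(0),
\]
so the dual optimal value is precisely the biconjugate of $\varphi$ evaluated at the origin. The final step, which I expect to be the hardest part, is to identify $\varphi^{**}$ with $\cl(\co(\varphi))$: this is the Fenchel--Moreau theorem in the paired-space setting, and its validity relies on the compatibility of the topologies on $U$ and $Y$ with the pairing, so that conjugation is an involution on closed convex functions and sends $\varphi$ to the same object as its closed convex hull. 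The delicate points are the bookkeeping for improper functions (values $\pm\infty$) and checking that the closure $\cl$ is taken in the topology compatible with the pairing; once these are handled as in Rockafellar~\cite{Rockafellar}, combining the displayed chain with $\varphi^{**}=\cl(\co(\varphi))$ yields $\sup_{y}g(y)=\cl(\co(\varphi))(0)$, completing the proof.
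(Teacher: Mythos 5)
Your proof is correct and takes essentially the same route as the proof the paper relies on: the paper gives no argument of its own but defers to Theorem 7 of Rockafellar~\cite{Rockafellar}, whose proof is precisely your computation $g(y)=\inf_{u}\{\varphi(u)+\langle u,y\rangle\}=-\varphi^{*}(-y)$, followed by $\sup_{y}g(y)=\varphi^{**}(0)=\cl(\co(\varphi))(0)$ via biconjugation in the paired-space setting, with $\varphi(0)=\inf_{x}f(x)$ immediate from $F(x,0)=f(x)$. Your closedness argument for $g$ (hypograph of an infimum equals the intersection of the closed hypographs) is also sound, so no gaps remain beyond the improper-function bookkeeping you already flag and correctly delegate to Rockafellar.
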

\noindent (where $\cl$ and $\co$ denote respectively the closure and the convex hull of a function, see Rockafellar~\cite{RockafellarConvex}). For the proof, see Rockafellar~\cite{Rockafellar}. Theorem~\ref{thm: dualprob} implies that \emph{if the value function $\varphi$ is convex, the lower semi-continuity of $\varphi$ is a sufficient condition for the absence of a duality gap}.


\begin{thebibliography}{99}


\bibitem{AndersonNash}
Anderson, E.~J. and Nash, P.: Linear Programming in Infinite Dimensional Spaces: Theory and Applications. Wiley-Interscience Series in Discrete Mathematics and Optimization, (1987).



\bibitem{Alais}
Alais, J.-C., Capentier, P. and De Lara, M.: Multi-usage hydropower single dam management: Chance-constrained optimization and stochastic viability. Energy Systems.

\bibitem{Chen}
Chen, Z. and Forsyth, P. A.: Pricing hydroelectric power plants with/without operational restrictions: a stochastic control approach. Nonlinear Models in Mathematical Finance. Nova Science Publishers, (2008).

\bibitem{Dahl}
Dahl, K. R.: A convex duality approach for pricing contingent claims under partial information and short selling constraints, to appear in Stochastic Analysis and Applications, 2016.

\bibitem{DahlStokkereit}
Stochastic maximum principle with Lagrange multipliers and optimal consumption with L{\'e}vy wage,
Afrika Matematika, vol. 27, no. 3, pp. 555-572, (2016).


\bibitem{DevolderEtAl}
Devolder, O., Glineur, F. and Nesterov, Y., Solving Infinite-dimensional Optimization Problems by Polynomial Approximation, Center for Research and Econometrics Discussion Paper, (2010).


\bibitem{Dohrman}
Dohrman, C. R. and Robinett, R. D., 
Dynamic programming method for constrained discrete time optimal control,
Journal of Optimization Theory and Applications, vol. 101, no. 2, pp. 259-283, (1999).


\bibitem{Huseby}
Huseby, A. B.: Optimizing energy production systems under uncertainty, Risk, Reliability and Safety: Innovating Theory and Practice: Proceedings of ESREL 2016,editors: Lesley Walls and Matthew Revie, (2016).

\bibitem{JiZhou}
Ji, S. and Zhou, X. Y., A maximum principle for stochastic optimal control with terminal state constraints and its applications.
Communication in Information and Systems, vol. 6, no. 4, pp. 321-338, (2006).

\bibitem{King}
King, A. J.: Duality and Martingales: A Stochastic Programming Perspective
on Contingent Claims. Mathematical Programming Ser. B. 91,
543-562, (2002).

\bibitem{KingKorf}
King, A. J., Korf, L.: Martingale Pricing Measures in Incomplete Markets
via Stochastic Programming Duality in the Dual of L1. Preprint. University
of Washington, (2001).


\bibitem{Pennanen1}
Pennanen, T.: Convex Duality in Stochastic Optimization and Mathematical
Finance. Mathematics of Operations Research. 36, 340-362, (2011)

\bibitem{Pennanen2}
Pennanen, T.: Introduction to Convex Optimization in Financial Markets.
Mathematical Programming, 134, pp.157-186, (2012).


\bibitem{Pennanen}
Pennanen, T., Perkki\"{o}, A.P.: Stochastic programs without duality gaps. Mathematical Programming. \textbf{136}, 91--110, (2012).


\bibitem{Rockafellar}
Rockafellar, R. T.: Conjugate Duality and Optimization. Society for Industrial and Applied Mathematics, Philadelphia, (1974).


\bibitem{RockafellarConvex}
Rockafellar, R. T.: Convex Analysis. Princeton University Press, Princeton, (1972).

\bibitem{RockafellarWets}
Rockafellar, R. T., Wets, R.J.-B.: Variational Analysis. Springer, Berlin Heidelberg, (2004).


\bibitem{Vanderbei}
Vanderbei, R.~J., Linear Programming: Foundations and Extensions. Kluwer Academic Publishers, Boston, (1996).



\end{thebibliography}
\end{document}